\documentclass [12pt] {article}

\usepackage{amsthm}
\theoremstyle {definition}
\newtheorem {definition} {Definition} [section]
\newtheorem {theorem} [definition] {Theorem}
\newtheorem {proposition} [definition] {Proposition}
\newtheorem {lemma} [definition] {Lemma}

\newtheorem {fact} [definition] {Fact}
\newtheorem {example} [definition] {Example}
\newtheorem {remark} [definition] {Remark}

\usepackage{natbib}

\usepackage{amsmath}
\usepackage{amsfonts}
\newcommand {\liff} {\leftrightarrow}

\newcommand {\Spec} {\mathrm {Spec}}
\newcommand {\I} {\mathcal I}

\newcommand {\comp}[1]{#1^{\mathrm c}}

\title{More on a Curious Nucleus}
\author{Levon Haykazyan}

\begin{document}

\maketitle

\begin{abstract}
Harold Simmons in [``A curious nucleus,'' {\em Journal of Pure and Applied
Algebra}, 2010] introduced a pre-nucleus and its associated nucleus that measure
the subfitness of a frame. Here we continue the study of this pre-nucleus.  We
answer the questions posed by Simmons.
\end{abstract}

\section{Introduction}

A {\em frame} $A$ is a complete lattice (our lattices will always be bounded)
satisfying the distributivity law
$$a \land \bigvee B = \bigvee \{a \land b : b \in B\},$$
for arbitrary $a \in A$ and $B \subseteq A$. Given a topological space $S$, the
lattice $\mathcal OS$ of open subsets of $S$ is a frame. The study of frames as
an algebraic approach to topology has been initiated by Ehresmann in 1950s.
Frames later acquired independent existence as a generalisation of topology that
is applicable to wider settings.

In an arbitrary distributive lattice $D$ we can consider the preorder
$$a \preceq b \text { iff } (\forall c \in D) [a \lor c = \top \implies b \lor c
= \top]$$
that refines the order $\le$ of the lattice.

A frame is called {\em subfit} if $\preceq$ agrees with $\le$. Subfitness was
introduces by \cite {isbell-locales} to show that compact subfit frames are
spatial (isomorphic to the lattice of open sets of a topological space). It was
rediscovered by \cite{simmons-separation} as a weak separation property.

Prompted by \cite{coquand-compact}, Simmons proposed in \cite{simmons-curious}
to study the map $\xi : A \to A$
$$\xi(a) = \bigvee \{b \in A : b \preceq a\}$$
for an arbitrary frame $A$. In this way, the frame $A$ is subfit if and only if
$\xi$ is the identity.

As any algebraic structure, a frame can be studied through its quotients. There
is an elegant way of handling quotients of a frame: a quotient is completely
determined by the map taking each element to the join of all elements with the
same image. Such maps are called {\em nuclei}. We can compare two nuclei (or
more generally arbitrary maps) $f, g : A \to A$ pointwise, i.e. $f \le g$ iff
for every $a \in A$ we have $f(a) \le g(a)$. This makes the set of all nuclei
into a poset, which turns out to be a frame too. Meets in this frame are
computed pointwise, however describing joins is more complicated. For this
reason, one often looks at a larger class of maps called {\em pre-nuclei} (see
the definition below). The map $\xi$ is a pre-nucleus.

\cite{simmons-curious} poses a number of questions about the pre-nucleus $\xi$
and its relationship with other nuclei. We answer these questions in this paper.

In section \ref{section-two} we study the pre-nucleus on a general frame.  We
show that $\xi$ is equal to the join of all (pre-)nuclei that only admit the top
element $\top$ (Theorem \ref{F_n=xi-theorem}). Next we construct an example to
show that the pre-nucleus $\xi$ (and consequently the join of all (pre-)nuclei
admitting only the top element) need not be a nucleus (Example
\ref{no-nucleus-example}). To obtain a nucleus from $\xi$, in general, we
therefore need to iterate it. We show that there is no a priori bound on the
closure ordinal of $\xi$ (Theorem \ref{no-bound-theorem}).

In section \ref{section-three} we study $\xi$ on the frame $\I D$ of ideals of a
distributive lattice $D$ where $\xi$ is idempotent and hence a nucleus. (This is
the setting in \cite{coquand-compact} which motivated the definition of $\xi$ in
\cite {simmons-curious}. However the nucleus $\xi$ in this setting has already
appeared in \cite{johnstone-amax}. This reference was apparently missed by both
\cite{coquand-compact} and \cite{simmons-curious}.) Actually $\I D$ is the
frame of open subsets of the spectrum $\Spec D$ of $D$. So a nucleus picks out
the subspace of prime ideals fixed by it. The subspace determined by $\xi$ is
characterised in \cite{johnstone-amax} as the soberification of the
space of maximal ideals of $D$. This characterisation immediately implies some
equivalent conditions for $\I D$ to be subfit (Proposition
\ref{jacobson-equivalence}). We call such lattices Jacobson.

Following \cite{simmons-curious} we compare $\xi$ to another nucleus $\chi : \I
D \to \I D$ defined by $\chi(I) = \bigcup_{c \in I} \{a \in D : a \preceq c\}$
(note that $\preceq$ here is computed in $D$ rather than $\I D$). Actually
$\preceq$ preserves (binary) meets and joins and therefore determines a
congruence $\equiv$ on $D$. From this it follows that the subspace of $\Spec D$
determined by $\chi$ is homeomorphic to $\Spec (D/{\equiv})$. We characterise it
as the least spectral subspace of $\Spec D$ containing maximal ideals (Theorem
\ref{chi-characterisation-theorem}). Lastly we show that $\chi = \xi$ if and
only if $D/{\equiv}$ is a Jacobson lattice (Theorem \ref{chi=xi-theorem}).

\subsection* {Acknowledgement}

The author is grateful to James Parson and the anonymous referee for careful
reading, comments and suggestions for improving the paper.

\section{The Pre-Nucleus $\xi$}
\label{section-two}

We assume some familiarity with the theory of frames. The necessary background
can be found e.g. in \cite{johnstone-stone-spaces} or \cite{picado-pultr-book}.
Let $A$ be a frame.

\begin{definition}
\begin{itemize}
\item An {\em inflator} is a map $f : A \to A$ that is inflationary and
monotone, i.e.
$$a \le f(a) \text { and } a \le b \implies f(a) \le f(b)$$
for all $a, b \in A$.
\item A {\em pre-nucleus} is an inflator that satisfies
$$f(a) \land f(b) \le f(a \land b)$$
for all $a, b \in A$. (The inequality is actually an equality by monotonicity.)
\item A {\em nucleus} is an idempotent pre-nucleus, i.e. it satisfies
$$f^2 = f \circ f = f.$$
\end{itemize}
\end{definition}

Before proceeding further, let us stress that in some literature (e.g. in
\cite{picado-pultr-book} and in \cite{banaschewski-tychonoff}, where it is
introduced) the term pre-nucleus is used for an inflator that
satisfies the weaker property $f(a) \land b \le f(a \land b)$. We follow
\cite{simmons-curious} in our choice of terminology.

Given any family $\mathcal F$ of functions from $A$ to $A$ we can consider their
pointwise join $\bigvee \mathcal F : A \to A$ defined by
$$(\bigvee \mathcal F)(a) = \bigvee \{f(a) : f \in \mathcal F\}.$$
The pointwise join of a nonempty directed family of pre-nuclei is a pre-nucleus.

For a pre-nucleus $f : A \to A$ we say that $f$ admits $a \in A$ if $f(a) =
\top$.  Let $\mathcal F_n$ and $\mathcal F_p$ be the set of all nuclei and
pre-nuclei respectively that only admit $\top$: that is the set of all
respective $f$ such that $f(a) = \top \implies a = \top$.  Since $\mathcal F_n
\subseteq \mathcal F_p$ we have that $\bigvee \mathcal F_n \le \bigvee \mathcal
F_p$. Note that $\mathcal F_p$ is closed under composition and is therefore
directed. It follows that $\bigvee \mathcal F_p$ is a pre-nucleus. We will later
see that, in contrast, $\bigvee \mathcal F_n$ need not be a nucleus.

Now let $f \in \mathcal F_p$ and $a, b \in A$. Since $f$ is an inflator we have
that $f(a) \lor b \le f(a \lor b)$. Therefore we have
$$f(a) \lor b = \top \implies f(a \lor b) = \top \implies a \lor b = \top.$$
We conclude that $f(a) \preceq a$ and therefore $f \le \xi$. It follows that
$\bigvee \mathcal F_p \le \xi$. We now show that it is in fact an equality,
answering question (5) of \cite{simmons-curious}.

\begin{theorem}
\label{F_n=xi-theorem}
For an arbitrary frame $A$ we have $\xi \le \bigvee \mathcal F_n$. Therefore
$\bigvee \mathcal F_n = \bigvee \mathcal F_p = \xi$.
\end{theorem}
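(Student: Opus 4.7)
The plan is to establish the missing inequality $\xi \le \bigvee \mathcal F_n$ pointwise. Fix $a \in A$; since $\xi(a) = \bigvee \{b : b \preceq a\}$ it suffices, for each such $b$, to exhibit a single nucleus $n \in \mathcal F_n$ with $n(a) \ge b$. Combined with the chain $\bigvee \mathcal F_n \le \bigvee \mathcal F_p \le \xi$ already recorded just before the theorem, this will yield the desired equalities.

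Given $b \preceq a$, the candidate I would try is
$$n(x) = (a \to x) \land (b \lor x),$$
where $a \to (-)$ is the Heyting implication carried by any frame via the infinite distributive law. This is the pointwise meet of the open nucleus $o_a(x) = a \to x$ and the closed nucleus $c_b(x) = b \lor x$, and so is automatically a nucleus because meets in the frame of nuclei are computed pointwise (as noted in the introduction). At $a$ we compute $n(a) = (a \to a) \land (b \lor a) = a \lor b \ge b$. For the admission property, suppose $n(x) = \top$: then $a \to x = \top$, i.e.\ $a \le x$, and $b \lor x = \top$; the hypothesis $b \preceq a$ upgrades the latter to $a \lor x = \top$, which combined with $a \le x$ forces $x = \top$. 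Hence $n \in \mathcal F_n$ and $b \le n(a) \le (\bigvee \mathcal F_n)(a)$, as required.

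The main obstacle is guessing the formula for $n$. Heuristically, $o_a$ inflates $a$ to $\top$ while $c_b$ raises every value above $b$, so their meet should lift $a$ to $a \lor b$ yet keep every non-top $x$ strictly below $\top$. Neither $o_a$ nor $c_b$ individually lies in $\mathcal F_n$ in general, but their meet inherits precisely the admission condition that the relation $b \preceq a$ was designed to kill; once the ansatz is in place every step is a short Heyting-algebra calculation.
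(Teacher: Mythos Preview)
Your proof is correct and in fact constructs exactly the same nucleus as the paper: there one sets $g(c) = \bigvee\{x : x \le b \lor c \text{ and } x \land a \le c\}$, and since $x \land a \le c$ is equivalent to $x \le a \to c$, this join is precisely $(a \to c) \land (b \lor c) = n(c)$. Your presentation is cleaner, though: by recognising $n$ as the pointwise meet $o_a \wedge c_b$ of an open and a closed nucleus you get the nucleus axioms for free, whereas the paper verifies inflation, monotonicity, meet-preservation and idempotence by hand.
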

\begin{proof}
Fix $a \in A$ and let $b \preceq a$. We construct a nucleus $g \in \mathcal F_n$
such that $b \le g(a)$. Let 
$$G(c) = \{x \in A : x \le b \lor c \text { and } x \land a \le c\}$$
and define $g(c) = \bigvee G(c)$. Since $c \in G(c)$ we conclude that $c \le
g(c)$. Note also that $c \le c' \implies G(c) \subseteq G(c') \implies g(c) \le
g(c')$. Therefore $g$ is an inflator.

Also by frame distributivity, $G(c)$ is closed under arbitrary joins and
in particular $g(c) \in G(c)$. Now given $c, c' \in A$ we have $g(c) \land g(c')
\le (b \lor c) \land (b \lor c') = b \lor (c \land c')$ and $(g(c) \land g(c'))
\land a \le c \land c'$. Hence $g(c) \land g(c') \in G(c \land c')$ and
therefore $g(c) \land g(c') \le g(c \land c')$. Thus $g$ is a pre-nucleus.

The map $g$ is actually a nucleus. To see this we need to use the fact that
$g(c) \in G(c)$. So let $d \in G(g(c))$.  Then $d \le b \lor g(c) \le b \lor (b
\lor c) = b \lor c$. Also $d \land a \le g(c)$. But we also have $d \land a \le
a$. Therefore $d \land a \le g(c) \land a \le c$. We conclude that $d \in G(c)$,
which implies that $g(g(c)) = g(c)$.

We claim that $g \in \mathcal F_n$. Indeed assume that $g(c) = \top$. Then $\top
\in G(c)$ and therefore $\top \le b \lor c$ and $\top \land a \le c$.
Which is to say $b \lor c = \top$ and $a \le c$. But since $b \preceq a$, the
equality $b \lor c = \top$ implies that $a \lor c = \top$ and therefore $c =
\top$.

Finally we have $b \le b \lor a$ and $b \land a \le a$. Therefore $b \in G(a)$
and so $b \le g(a) \le (\bigvee \mathcal F_n)(a)$. Thus $\xi(a) = \bigvee \{b
\in A : b \preceq a\} \le (\bigvee \mathcal F_n)(a)$.
\end{proof}

Next we give an example of a frame to show that $\xi$ (and therefore $\bigvee
\mathcal F_p$) need not be a nucleus. This answers the question (1) (and also
question (3)) of \cite{simmons-curious}. Our example is actually spatial (i.e.
the frame of opens of a topological space).

Given a topological space $S$ consider the frame $\mathcal O S$ of open subsets.
In this case for $W, U \in \mathcal O S$ we have
$$U \preceq W \text { iff } \forall u \in (U \setminus W)[u^- \not \subseteq
U],$$
where $u^-$ denotes the topological closure of $\{u\}$ (see
\cite{simmons-curious}).

\begin{example}
\label{no-nucleus-example}
Let $\omega = \{0, 1, ...\}$ be the first infinite ordinal and $\omega^+ =
\omega \cup \{*\}$, where $*$ is a new element. The universe of our topological
space is $\omega \times \omega^+$. The basis of the topology is given by sets of
the following form
$$U \times \{y\},$$
where $U \subseteq \omega$ is downward closed and $y \in \omega$ and
$$\omega \times V \cup U \times \{*\},$$
where $U \subseteq \omega$ is downward closed and $V \subseteq \omega$ is
cofinite.

We claim that $\xi(\emptyset) = \omega \times \omega$. Indeed given $(x,y) \in
\omega \times \omega$ consider its open neighbourhood $U = [0,x+1) \times
\{y\}$. The point $(x+1, y)$, which is outside of $U$ is in the closure of every
point of $U$. This shows that $U \preceq \emptyset$ and therefore $(x,y) \in
\xi(\emptyset)$. Now consider a point $(x, *)$. Each basic open neighbourhood
$W$ of it contains a subset of the form $\omega \times \{y\}$, with $y \in
\omega$, which is closed. Therefore $W \not \preceq \emptyset$.

To see that $\xi$ is not a nucleus note that $\xi^2(\emptyset) = \xi(\omega
\times \omega) = \omega \times \omega^+$. Indeed given a point $(x, *)$,
consider its open neighbourhood $U = \omega \times \omega \cup [0, x+1) \times
\{*\}$. Now the point $(x+1, *)$, which is not in $U$, is in the closure of
every point of $U \setminus \omega \times \omega$. This shows that $U \preceq
\omega \times \omega$.
\end{example}

So in general $\xi$ is not a nucleus. However for any pre-nucleus there is a
least nucleus above it. To obtain it we need to iterate the pre-nucleus as
follows. For an ordinal $\alpha$ define $\xi^\alpha$ by
\begin{itemize}
\item $\xi^0$ is the identity function;
\item $\xi^{\alpha+1} = \xi \circ \xi^\alpha$;
\item $\xi^\alpha = \bigvee_{\beta < \alpha} \xi^\beta$ if $\alpha$ is a limit
ordinal.
\end{itemize}
Then for each frame $A$ (by cardinality considerations) there is an ordinal
$\alpha$ such that $\xi^{\alpha+1} = \xi^\alpha$, which will then be a nucleus.
This ordinal $\alpha$ in general depends on $A$. We next show that there is no a
priori bound, answering question (2) (and question (4)) of
\cite{simmons-curious}.

For that we need to analyse the above example in more details. We recall two
constructions on frames.

The first construction is the Cartesian product (or just product) of frames.
Suppose we have a collection $(A_i : i \in I)$ of frames. Their Cartesian
product is the frame with underlying set $\Pi_{i \in I} A$ where the operations
of meet and join are performed coordinatewise. We denote the Cartesian product
of two frames $A_1$ and $A_2$ by $A_1 \times A_2$. It follows that $(a_i)_{i \in
I} \preceq (b_i)_{i \in I}$ if and only if $a_i \preceq b_i$ for every $i \in
I$. Therefore $\xi((a_i)_{i \in I}) = (\xi(a_i))_{i \in I}$ and more generally
$\xi^\alpha((a_i)_{i \in I}) = (\xi^\alpha(a_i))_{i \in I}$. Note that there is
a slight abuse of notation here, since we use the same letter $\xi$ for nuclei
on all frames.

The second construction is the tensor product (or coproduct) of frames. We only
need binary tensor products, so we introduce only binary ones. Let $A$ and $B$
be frames. Their tensor product $A \otimes B$ is the frame presented by
generators $a \otimes b$, where $a \in A$ and $b \in B$, subject to the
following relations
\begin{itemize}
\item $(a_1 \otimes b_1) \land (a_2 \otimes b_2) = (a_1 \land a_2) \otimes (b_1
\land b_2)$, so meets are computed coordinatewise;
\item $\bigvee_i (a_i \otimes b) = (\bigvee_i a_i) \otimes b$, in particular for
a nullary join we have $\bot = \bot \otimes b$;
\item $\bigvee_i (a \otimes b_i) = a \otimes (\bigvee_i b_i)$, and thus $\bot =
a \otimes \bot$.
\end{itemize}
The general element of $A \otimes B$ has the form $\bigvee_i a_i \otimes b_i$,
though this representation is not unique.

Now let $A$ and $B$ be frames and let $\mathcal P(\omega)$ denote the frame of
all subsets of $\omega$. Denote $C = B \otimes \mathcal P(\omega)$ and consider
the subframe $D$ of $C \times A$ such that $(c, a) \in D$ iff
either $a = \bot_A$ or there is a cofinite subset $Y \subseteq \omega$ such that
$\top_B \otimes Y \le c$. It is easy to check that $D$ is closed under finite
meets and arbitrary joins. For notational purposes below we write $\xi(c, a)$
for $\xi((c,a))$ and $\comp Y$ for the complement $\omega \setminus Y$ of a
subset $Y \subseteq \omega$.

\begin{lemma}
In the frame $D$ the following hold:
\begin{enumerate}
\item if $\top_B \neq b \in B$, then $\xi(b \otimes \omega, \bot_A) = (\xi(b)
\otimes \omega, \bot_A)$;
\item $\xi(\top_C, a) = (\top_C, \xi(a))$.
\end{enumerate}
\end{lemma}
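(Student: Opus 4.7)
My approach is to work throughout with the coordinate description of $C$. Every element of $C = B \otimes \mathcal{P}(\omega)$ can be written as $\bigvee_{n \in \omega} c_n \otimes \{n\}$ with $c_n \in B$; under this representation, joins, finite meets and the top element of $C$ are all computed coordinatewise, and $\top_B \otimes Y \le c$ amounts to $c_n = \top_B$ for every $n \in Y$. In particular, membership of $(c, a)$ in $D$ reduces to the condition: $a = \bot_A$, or else $c_n = \top_B$ on some cofinite set. Since $D$ is closed under arbitrary joins and $\xi(x) = \bigvee \{y \in D : y \preceq x\}$ computed in $D$, in both parts I will identify the set of $(c', a') \in D$ that lie $\preceq$-below the given element.

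For (2), the inclusion $(\top_C, \xi(a)) \preceq (\top_C, a)$ will follow directly from $\xi(a) \preceq a$ in $A$: if $(c'', a'') \in D$ and $(\top_C, \xi(a)) \lor (c'', a'') = \top_D$, the first coordinate is trivial and the second gives $\xi(a) \lor a'' = \top_A$, hence $a \lor a'' = \top_A$. Conversely, if $(c', a') \preceq (\top_C, a)$ and $a' \lor a'' = \top_A$ in $A$, I will test against $(\top_C, a'')$, which is always in $D$; this forces $a \lor a'' = \top_A$, so $a' \preceq a$ in $A$, giving $a' \le \xi(a)$, while $c' \le \top_C$ is automatic.

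For (1), the inclusion $(\xi(b) \otimes \omega, \bot_A) \preceq (b \otimes \omega, \bot_A)$ is again a coordinatewise use of $\xi(b) \preceq b$ in $B$: any $(c'', a'') \in D$ with $(\xi(b) \otimes \omega) \lor c'' = \top_C$ must satisfy $\xi(b) \lor c''_n = \top_B$, hence $b \lor c''_n = \top_B$, for every $n$, i.e. $b \otimes \omega \lor c'' = \top_C$; the equality $a'' = \top_A$ is also forced. For the reverse inclusion, suppose $(c', a') \preceq (b \otimes \omega, \bot_A)$. I will first show $a' = \bot_A$: otherwise, by the definition of $D$ there is a cofinite $Y$ with $c'_n = \top_B$ for $n \in Y$; picking any $n_0 \in Y$, set $c''_{n_0} = \bot_B$ and $c''_m = \top_B$ for $m \ne n_0$, and pair with $a'' = \top_A$. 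Then $(c'', \top_A) \in D$ (with cofinite set $\comp{\{n_0\}}$), and $(c', a') \lor (c'', \top_A) = \top_D$, yet $((b \otimes \omega) \lor c'')_{n_0} = b \lor \bot_B = b \ne \top_B$ by hypothesis, contradicting $\preceq$. Next, to see $c'_n \le \xi(b)$ for each $n$, I will check $c'_n \preceq b$ in $B$: given $c'_n \lor b'' = \top_B$, set $c''_n = b''$ and $c''_m = \top_B$ for $m \ne n$; then $(c'', \top_A) \in D$, $(c', \bot_A) \lor (c'', \top_A) = \top_D$, and $\preceq$ delivers $b \lor b'' = \top_B$, as required.

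The main obstacle will be the $a' \ne \bot_A$ case of the reverse inclusion in (1): the right witness is not obvious a priori. The key observation is that $D$'s defining condition forces a cofinite $Y$ in the first coordinate whenever $a' \ne \bot_A$, and this very cofiniteness lets one knock a single coordinate inside $Y$ down to $\bot_B$ while still keeping the resulting witness inside $D$ (by pairing it with $a'' = \top_A$); a single failure at a coordinate where $b \ne \top_B$ then breaks the conclusion $b \otimes \omega \lor c'' = \top_C$.
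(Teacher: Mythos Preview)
Your proof is correct and follows essentially the same line as the paper's: the paper tests against the elements $(\top_B\otimes\comp{\{j\}},\top_A)$ and $((\top_B\otimes\comp{\{j\}})\lor(b'\otimes\{j\}),\top_A)$, which are exactly your witnesses written in generator form rather than in your coordinate form $c=\bigvee_n c_n\otimes\{n\}$. The one extra ingredient you use is the identification $B\otimes\mathcal P(\omega)\cong B^{\omega}$ (so that meets, joins and the condition $\top_B\otimes Y\le c$ can be read coordinatewise); this is true and makes the bookkeeping cleaner, but it is worth stating explicitly, since the paper works directly with the tensor-product presentation and never invokes it.
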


\begin{proof}
\begin{enumerate}
\item Let $(c, a) \preceq (b \otimes \omega, \bot_A)$ in $D$. First we show that
$a = \bot_A$. Assume otherwise; then by the definition of $D$, there is a
cofinite subset $Y \subseteq \omega$ such that $\top_B \otimes Y \le c$. Let $j
\in Y$. Consider the element $(\top_B \otimes \comp{\{j\}},
\top_A)$. It's join with $(c, a)$ is $\top_D$, however the join with $(b
\otimes \omega, \bot_A)$ is not (since $b \neq \top_B$). This is a
contradiction showing that $a = \bot_A$.

Now let $c = \bigvee_i b_i \otimes X_i$ and $X_i \neq \emptyset$. We claim that
$(c, \bot_A) \preceq (b \otimes \omega, \bot_A)$ implies that $b_i \preceq b$.
It is enough to assume that $c = b_0 \otimes X_0$ and $X_0 \neq \emptyset$. We
need to show that $b_0 \preceq b$. Let $b' \in B$ be such that $b_0 \lor b' =
\top_B$. Pick $j \in X_0$ and consider 
$$((\top_B \otimes \comp{\{j\}}) \lor (b' \otimes \{j\}), \top_A) \in D.$$
Its join with $(c, \bot_A)$ is $\top_D$ and therefore so is its join with $(b
\otimes \omega, \bot_A)$. But this implies that	$b \lor b' = \top_B$, which
proves that $b_0 \preceq b$. From this we deduce that $\xi(b \otimes \omega,
\bot_A) \le (\xi(b) \otimes \omega, \bot_A)$.

To see the converse, we show that if $b_0 \preceq b$, then $(b_0 \otimes \omega,
\bot_A) \preceq (b \otimes \omega, \bot_A)$. Let $(c', a') \in D$ be such that
$((b_0 \otimes \omega) \lor c', a') = \top_D$. Then $a' = \top_A$ and therefore
there is a cofinite subset $Y = \omega \setminus \{i_0, ..., i_{n-1}\}$ such
that $\top_B \otimes Y \le c'$. Thus $c' = \top_B \otimes Y \lor \bigvee_{j =
0}^{n-1} b_{i_j} \otimes \{i_j\}$ for some $b_{i_j} \in B$. It follows that for
each $j = 0, ..., n-1$ we have $b_{i_j} \lor b_0 = \top_B$. Thus by the
assumption that $b_0 \preceq b$ we get that $b_{i_j} \lor b = \top_B$. From this
we deduce that $((b \otimes \omega) \lor c', a') = \top_D$.

\item This follows from the following easy observation $(c', a') \preceq
(\top_C, a)$ if and only if $a' \preceq a$. 
\end{enumerate}
\end{proof}

\begin{theorem}
\label{no-bound-theorem}
For every ordinal $\alpha > 0$ there is a frame $A_\alpha$ such that $\alpha$ is
the least ordinal with $\xi^\alpha(\bot_A) = \top_A$.
\end{theorem}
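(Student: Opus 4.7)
The plan is to build the frames $A_\alpha$ by transfinite recursion on $\alpha$, using three ingredients: a direct construction of a chain for the base, the Cartesian product for limit ordinals, and the construction $D$ of the preceding Lemma for successors.

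For the base case $\alpha = 1$ I take $A_1$ to be the complete chain $\{0\} \cup \{(n-1)/n : n \ge 2\} \cup \{1\}$ (any non-degenerate complete chain whose top is the join of strictly smaller elements will do). Since $\lor$ is $\max$ in a chain, for each $b < \top$ the implication $b \lor c = \top \Rightarrow c = \top$ is automatic, giving $b \preceq \bot$; hence $\xi(\bot) = \top$ while $\bot \neq \top$, so the least closure ordinal is $1$.

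For a successor $\alpha = \gamma + 1$ with $\gamma \ge 1$, I apply the construction preceding the Lemma with $B := A_\gamma$ (non-degenerate by induction) and $A := A_1$, and set $A_\alpha := D$. I then prove by a subsidiary transfinite induction on $\beta \le \gamma$ the invariant
$$\xi^\beta(\bot_D) = (\xi^\beta(\bot_B) \otimes \omega,\; \bot_A).$$
At successor stages of this induction I invoke part (1) of the Lemma, whose hypothesis $\xi^{\beta'}(\bot_B) \neq \top_B$ (for $\beta' < \gamma$) is supplied by the outer induction hypothesis that $\gamma$ is the closure ordinal of $A_\gamma$. At limit stages I use that joins in the subframe $D$ are computed coordinatewise together with the tensor identity $\bigvee_i (b_i \otimes \omega) = (\bigvee_i b_i) \otimes \omega$. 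At $\beta = \gamma$ the invariant yields $\xi^\gamma(\bot_D) = (\top_B \otimes \omega, \bot_A) = (\top_C, \bot_A)$, and applying $\xi$ once more through part (2) of the Lemma gives $\xi^{\gamma+1}(\bot_D) = (\top_C, \xi(\bot_{A_1})) = (\top_C, \top_{A_1}) = \top_D$. Conversely, every $\xi^\beta(\bot_D)$ with $\beta \le \gamma$ has second coordinate $\bot_A \neq \top_A$, so $\xi^\beta(\bot_D) \neq \top_D$, which gives minimality.

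For a limit $\alpha$ I set $A_\alpha := \prod_{0 < \beta < \alpha} A_\beta$. Since $\xi$ and hence each $\xi^\delta$ act coordinatewise on Cartesian products, as remarked in the paper, $\xi^\delta(\bot_{A_\alpha}) = (\xi^\delta(\bot_{A_\beta}))_\beta$. At $\delta = \alpha$ every coordinate has already reached $\top_{A_\beta}$ because $\alpha \ge \beta$ and $\xi$ fixes $\top_{A_\beta}$. For $\delta < \alpha$ I pick some $\beta$ with $\delta < \beta < \alpha$, and by the outer induction the $\beta$-th coordinate $\xi^\delta(\bot_{A_\beta})$ is strictly below $\top_{A_\beta}$, so $\xi^\delta(\bot_{A_\alpha}) \neq \top$. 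The main obstacle is the successor step: the invariant above must survive limit values of $\beta \le \gamma$, which rests on the compatibility between transfinite joins in $D$ and the tensor operation $- \otimes \omega$, together with careful bookkeeping to ensure the correct part of the Lemma is applicable at each stage.
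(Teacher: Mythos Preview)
Your proof is correct and follows essentially the same strategy as the paper: the same base frame (up to an obvious isomorphism of complete chains), the same successor step using the subframe $D$ of the preceding Lemma with $B=A_\gamma$ and $A=A_1$, and the same Cartesian product at limit stages. You have in fact spelled out more carefully than the paper the subsidiary transfinite induction on $\beta \le \gamma$ needed to propagate the invariant $\xi^\beta(\bot_D)=(\xi^\beta(\bot_B)\otimes\omega,\bot_A)$, including the verification that the hypothesis $\xi^{\beta'}(\bot_B)\neq\top_B$ of part~(1) is available and that limit stages are handled by coordinatewise joins together with the tensor identity.
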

\begin{proof}
By induction on $\alpha$.
\begin{itemize}
\item For $\alpha = 1$ we can pick e.g. $A_1$ to be the frame of downward closed
subsets of $\omega$.
\item Assume the hypothesis for $\alpha$. Let $A_{\alpha+1}$ be the subframe of
$(A_\alpha \otimes \mathcal P(\omega)) \times A_1$ described in the previous
lemma. Then, by that lemma, $\alpha$ is the least ordinal such that
$$\xi^\alpha(\bot_{A_{\alpha+1}}) = (\xi^\alpha(\bot_{A_\alpha}) \otimes \omega,
\bot_{A_1}) = (\top_{A_\alpha} \otimes \omega, \bot_{A_1}).$$
Hence 
$$\xi^{\alpha+1}(\bot_{A_{\alpha+1}}) = (\top_{A_\alpha} \otimes \omega,
\xi(\bot_{A_1})) = \top_{A_{\alpha+1}}$$
and $\alpha+1$ is the least such.
\item Let $\alpha$ be a limit ordinal and assume the hypothesis for all $\beta <
\alpha$. Take $A_\alpha = \prod_{\beta < \alpha} A_\beta$.
\end{itemize}
\end{proof}

\section {The Nuclei $\xi$ and $\chi$ on a Frame of Ideals}
\label{section-three}

Let $D$ be a distributive lattice. Then the lattice $\I D$ of all ideals of $D$
is a frame. It is the frame of open sets of the spectrum $\Spec D$ of $D$.
Recall that $\Spec D$ is the space of prime ideals of $D$ in the topology
generated by basic open sets of the form $\{I \in \Spec D: a \not \in I\}$ for
$a \in D$. Since we have two lattices now, we will use lowercase letters for
elements of $D$ and uppercase letters for elements of $\I D$.

Let $I \in \I D$ and $a \in D$. Denote by ${\downarrow} a$ the principal ideal
$\{b \in D: b \le a\}$. We then have ${\downarrow} a \preceq I$ iff for every
ideal $J \in \I D$ such that ${\downarrow} a \lor J = D$ we have $I \lor J = D$.
It is enough to check for principal ideals $J = {\downarrow} b$. Thus we can
reformulate this as ${\downarrow} a \preceq I$ iff for every $b \in D$ we have $a
\lor b = \top_D$ implies there is $c \in I$ such that $c \lor b = \top_D$.
Finally observing that $a \in \xi(I)$ iff ${\downarrow} a \preceq I$ we get
$$a \in \xi(I) \iff (\forall b \in D) \big[a \lor b = \top_D \implies (\exists c
\in I) [c \lor b = \top_D]\big].$$

From this characterisation it is easy to check that $\xi^2(I) = \xi(I)$ and
therefore $\xi$ is a nucleus on $\I D$. The nucleus $\xi$ picks out the subspace
$(\Spec D)_\xi = \{I \in \Spec D : \xi(I) = I\}$ of $\Spec D$ whose frame of
opens is the lattice $(\I D)_\xi = \{I \in \I D: \xi(I) = I\}$. Part of question
(7) of \cite{simmons-curious} asks to characterise the space $(\Spec D)_\xi$. In
fact such a characterisation had already appeared in \cite{johnstone-amax}.

\begin{fact} [\cite{johnstone-amax}]
\label{jacobson-fact}
For every ideal $I \in \I D$, the ideal $\xi(I)$ is the intersection of maximal
ideals containing $I$. The space $(\Spec D)_\xi$ is the soberification of the
subspace of maximal ideals of $D$. It is compact and Jacobson. (Recall that a
space is Jacobson if every closed set is the closure of its closed points).
\end{fact}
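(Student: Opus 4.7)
The plan is to establish the three assertions using the explicit description of $\xi$ derived just above the statement:
$$a \in \xi(I) \iff (\forall b \in D)\bigl[a \lor b = \top_D \implies (\exists c \in I)\, c \lor b = \top_D\bigr].$$

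First I would show $\xi(I)$ equals $M(I) := \bigcap\{M : M \text{ maximal}, I \subseteq M\}$. The inclusion $\xi(I) \subseteq M(I)$ is direct: if $a \in \xi(I)$ but $a \notin M$ for some maximal $M \supseteq I$, then maximality supplies $b \in M$ with $a \lor b = \top_D$, and the characterization supplies $c \in I \subseteq M$ with $c \lor b = \top_D$, contradicting properness of $M$. For the reverse inclusion, given $a \notin \xi(I)$, fix a witness $b$ with $a \lor b = \top_D$ but $c \lor b \neq \top_D$ for every $c \in I$. The ideal generated by $I \cup \{b\}$ is then proper (every one of its elements is dominated by some $c \lor b < \top_D$), so Zorn's lemma extends it to a maximal ideal $M$ containing $I$ and $b$ but not $a$; hence $a \notin M(I)$.

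The remaining assertions then follow by standard frame-theoretic reasoning. By the first part, every maximal ideal $M$ satisfies $\xi(M) = M$ and so lies in $(\Spec D)_\xi$. Writing $X$ for the subspace of maximal ideals of $\Spec D$, two ideals $I, J \in \I D$ restrict to the same open of $X$ iff they are contained in the same maximal ideals, iff $\xi(I) = \xi(J)$; so the frame of opens of $X$ is canonically $(\I D)_\xi$, the frame of opens of $(\Spec D)_\xi$. Since $(\Spec D)_\xi$ is sober (being the space of points of the frame $(\I D)_\xi$) and has the same frame of opens as $X$, the universal property of soberification identifies it with the soberification of $X$.

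For compactness, observe that $\xi(J) = D$ forces $J = D$ (by Zorn, every proper ideal lies below some maximal ideal), so the top element of $(\I D)_\xi$ is compact and hence $(\Spec D)_\xi$ is quasi-compact. For the Jacobson property, the closed points of $(\Spec D)_\xi$ are exactly the maximal ideals, since every prime is contained in a maximal one and maximal ideals lie in $(\Spec D)_\xi$. A closed subset has the form $V(I) \cap (\Spec D)_\xi$ with $I = \xi(I)$, and the closure of its closed points is $V\bigl(\bigcap\{M \text{ maximal} : I \subseteq M\}\bigr) = V(\xi(I)) = V(I)$, again by the first part. The only genuine obstacle is the invocation of Zorn's lemma in the initial step; the remainder is routine bookkeeping given the explicit characterization of $\xi$.
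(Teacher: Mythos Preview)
The paper does not prove this statement; it is recorded as a \texttt{Fact} with a citation to \cite{johnstone-amax} and no argument is given. Your proof is correct and is essentially the standard one: the equality $\xi(I)=\bigcap\{M\supseteq I:M\text{ maximal}\}$ is exactly how Johnstone's result is usually established, and the remaining claims follow from it by the routine frame-theoretic bookkeeping you outline. One small point you might make explicit is the identification of $(\Spec D)_\xi$ (as a subspace of $\Spec D$) with the space of points of the quotient frame $(\I D)_\xi$; you invoke this when asserting sobriety, and it amounts to checking that a prime ideal $P$ satisfies $\xi(P)=P$ iff the corresponding frame map $\I D\to 2$ factors through $(\I D)_\xi$, which is immediate from monotonicity of $\xi$.
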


We have the following corollary.

\begin{proposition}
\label{jacobson-equivalence}
The following conditions are equivalent on a distributive lattice $D$.
\begin{enumerate}
\item Every prime ideal is an intersection of maximal ideals.
\item Every ideal is an intersection of maximal ideals.
\item The frame $\I D$ is subfit, i.e. $\xi$ is the identity on $\I D$.
\item The spectrum $\Spec D$ is a Jacobson space.
\end{enumerate}
\end{proposition}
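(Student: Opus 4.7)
The plan is to let Fact~\ref{jacobson-fact} do most of the work. That fact already gives the closed-form description $\xi(I)=\bigcap\{M: I\subseteq M,\ M\text{ maximal}\}$ and tells us that the subspace $(\Spec D)_\xi$ is always Jacobson. Beyond this, the only extra classical ingredient I need is the prime ideal theorem for distributive lattices: every ideal $I\in\I D$ equals the intersection of the prime ideals containing it. Given these inputs, the four equivalences should drop out by essentially mechanical unpacking.

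First I would establish $(2)\iff(3)$: condition $(3)$ says $\xi(I)=I$ for every $I$, which by the explicit description of $\xi$ in Fact~\ref{jacobson-fact} is literally condition $(2)$. Next I would prove $(1)\iff(2)$. The direction $(2)\Rightarrow(1)$ is trivial since prime ideals are ideals. For $(1)\Rightarrow(2)$, given any ideal $I$, write $I=\bigcap_\alpha P_\alpha$ for the primes $P_\alpha\supseteq I$ (prime ideal theorem), then replace each $P_\alpha$ by an intersection of maximals above it using $(1)$; since the maximals appearing are precisely the maximals containing $I$, this yields $(2)$.

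For $(3)\iff(4)$ I would argue via the sober duality between $\I D$ and $\Spec D$. Condition $(3)$ says $(\I D)_\xi=\I D$, which—since both frames correspond to sober spaces and the inclusion of frames is induced by the subspace inclusion $(\Spec D)_\xi\hookrightarrow\Spec D$—is equivalent to $(\Spec D)_\xi=\Spec D$. Combined with the fact that $(\Spec D)_\xi$ is always Jacobson, this gives $\Spec D$ is Jacobson, i.e.\ $(4)$. Conversely, if $\Spec D$ is Jacobson, then each closed set $V(I)=\{P\in\Spec D:I\subseteq P\}$ equals the closure of its closed points (the maximals containing $I$), so every radical ideal is an intersection of maximals; since in a distributive lattice every ideal is radical (by the prime ideal theorem), this gives $(2)$, hence $(3)$.

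The main obstacle here is really just organisational: none of the individual steps is hard given Fact~\ref{jacobson-fact}. The only points deserving a line of explicit justification are the appeal to the prime ideal theorem in $(1)\Rightarrow(2)$ and, in $(3)\iff(4)$, the translation between ``the fixed-point frame equals $\I D$'' and ``the fixed-point subspace equals $\Spec D$,'' which relies on both spaces being sober. Once this cycle of implications is in place the proposition is complete.
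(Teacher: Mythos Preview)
Your proposal is correct and follows essentially the same approach as the paper: both rely on Fact~\ref{jacobson-fact} together with the prime ideal theorem, the paper running the cycle $1\Rightarrow 2\Rightarrow 3\Rightarrow 4\Rightarrow 1$ while you establish several of the equivalences directly. One minor simplification for your $(3)\Rightarrow(4)$ step: you do not need sober duality, since if $\xi$ is the identity on $\I D$ then trivially every prime $I$ satisfies $\xi(I)=I$, so $(\Spec D)_\xi=\Spec D$ immediately, and Fact~\ref{jacobson-fact} then gives that $\Spec D$ is Jacobson.
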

\begin{proof}
$1 \implies 2$ holds since every ideal of a distributive lattice is the
intersection of prime ideals containing it.

$2 \implies 3$. Assuming $2$, by Fact \ref{jacobson-fact} we get that the
nucleus $\xi$ is the identity on $\I D$. Therefore $\I D$ is subfit.

$3 \implies 4$ again follows from Fact \ref{jacobson-fact}.

$4 \implies 1$. Assume that $\Spec D$ is a Jacobson space. Let $I \in \Spec D$
and consider its closure $I^{-}$. By the assumption, it is the closure of its
closed points $\{J_l : l \in L\}$. Then each $J_l$ is a maximal ideal containing
$I$. We claim that $I = \bigcap_{l \in L} J_l$. Indeed suppose that $a \in D
\setminus I$. Then $I$ is in the basic open set $D(a) = \{J \in \Spec D: a \not
\in J\}$. Since $I$ is in the closure of $\{J_l : l \in L\}$ there is an $l \in
L$ such that $J_l \in D(a)$. But then $a \not \in J_l$.
\end{proof}

We are not aware of studies of such lattices in the literature. However, given
the obvious analogy with rings, there is only one name we can give to them.

\begin{definition}
A distributive lattice satisfying the equivalent conditions of Proposition
\ref{jacobson-equivalence} is called a {\em Jacobson lattice}.
\end{definition}

\cite{simmons-curious} compares $\xi$ to another nucleus $\chi : \I D \to \I D$
defined by
$$\chi(I) = \bigcup_{c \in I} \{a \in D: a \preceq c\}.$$
The map $\chi$ is indeed a nucleus and it is easy to check that $\chi \le \xi$.
In the rest of this section we will characterise the space $(\Spec D)_\chi = \{I
\in \Spec D: \chi(I) = I\}$ as well as characterise the lattices $D$ for which
$\chi = \xi$.

For this we need to look at the preorder $\preceq$ in more details.  We would
like to make an order out of $\preceq$. For that we need to quotient $D$ by the
equivalence relation $\equiv$ defined by $a \equiv b$ if $a \preceq b$ and $b
\preceq a$.

\begin{lemma}
\label{top-lemma}
The relation $\equiv$ is the largest lattice congruence on $D$ such that $a
\equiv \top$ implies $a = \top$.
\end{lemma}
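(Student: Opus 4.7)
The plan is to verify three things in turn: first that $\equiv$ is a lattice congruence, second that it satisfies the top-admitting condition, and third that it dominates every other congruence with that property. The last (maximality) step is the cleanest and really the point of the lemma; the meet-compatibility in the first step is where frame distributivity has to be invoked.

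First I would unpack $\equiv$ as the symmetric part of the preorder $\preceq$, so it is automatically reflexive, symmetric and transitive. For join-compatibility, suppose $a \equiv a'$ and $b \equiv b'$; to show $a \lor b \preceq a' \lor b'$, I would take $c$ with $(a \lor b) \lor c = \top$, rewrite as $a \lor (b \lor c) = \top$, apply $a \preceq a'$ to get $a' \lor (b \lor c) = \top$, then rewrite as $b \lor (a' \lor c) = \top$ and apply $b \preceq b'$. For meet-compatibility, the obstacle is that $\preceq$ is defined through joins, so I would use distributivity: from $(a \land b) \lor c = \top$ one has $(a \lor c) \land (b \lor c) = \top$, hence both $a \lor c$ and $b \lor c$ equal $\top$, so $a' \lor c = \top$ and $b' \lor c = \top$, and distributivity in reverse gives $(a' \land b') \lor c = \top$. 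This is the only place where the lattice structure of $D$ really enters.

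Second, to see that $\equiv$ admits only $\top$: if $a \equiv \top$, then in particular $\top \preceq a$, which unpacked says that for every $c$, $\top \lor c = \top$ forces $a \lor c = \top$. Taking $c = \bot$ gives $a = \top$ at once.

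Third, for maximality, let $\sim$ be any lattice congruence on $D$ with the property $a \sim \top \implies a = \top$, and suppose $a \sim b$. Given any $c$ with $a \lor c = \top$, the congruence property gives $\top = a \lor c \sim b \lor c$, and hence $b \lor c = \top$ by the assumed property of $\sim$; symmetrically $b \lor c = \top \implies a \lor c = \top$. So $a \equiv b$, which shows $\sim\,\subseteq\,\equiv$ and completes the proof.
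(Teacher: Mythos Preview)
Your proof is correct and follows the same line as the paper's: the paper simply asserts that $\equiv$ is a congruence (``easy to check'') and then gives exactly your arguments for the top-admitting property (take $c = \bot$ in $\top \preceq a$) and for maximality (push $c$ across the congruence and use the hypothesis). One small terminological slip: you speak of ``frame distributivity,'' but in this section $D$ is only a distributive lattice, and indeed only the finite law $(a \land b) \lor c = (a \lor c) \land (b \lor c)$ is used in your meet-compatibility step.
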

\begin{proof}
It is easy to check that $\equiv$ is a congruence. If $a \equiv \top$, then
$\top \preceq a$. But since $\top \lor \bot = \top$ the definition of $\preceq$
gives $a = a \lor \bot = \top$.

Now suppose $\equiv'$ is a congruence with this property. Let $a, b \in D$ be
such that $a \equiv' b$. Fix $c \in D$ with property $a \lor c = \top$. Then we
have $\top \equiv' a \lor c \equiv' b \lor c$ and hence $b \lor c = \top$. This
shows that $a \preceq b$. Since $\equiv'$ is symmetric we also get $b \preceq a$
and so $a \equiv b$.
\end{proof}

We now consider the lattice $D/{\equiv}$ (with ordering $\preceq$) and the
associated surjection from $D$. In general, if $D'$ is another distributive
lattice and $f : D \to D'$ a homomorphism, then the inverse image map $f^{-1}
:\Spec D' \to \Spec D$ is a spectral map (preimages of compact open sets are
compact open). Conversely any spectral map from a spectral space to $\Spec D$
induces a lattice homomorphism.  This is the well known Stone representation
theorem for distributive lattices. The surjective lattice homomorphisms are
characterised as follows.

\begin{fact} [\cite{dst-spectral}]
\label{surjective-lemma}
A lattice homomorphism $f : D \to D'$ is surjective if and only if the spectral
map $f^{-1} : \Spec D' \to \Spec D$ is a homeomorphism onto its image. 
\end{fact}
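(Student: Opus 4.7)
My plan is to prove both directions by working at the level of compact open sets, exploiting the Stone correspondence between $D$ and the lattice of compact open subsets of $\Spec D$. Write $U(a) = \{I \in \Spec D : a \notin I\}$ and $U'(b) = \{J \in \Spec D' : b \notin J\}$; these are the basic compact opens, and the assignments $a \mapsto U(a)$ and $b \mapsto U'(b)$ are lattice isomorphisms (primes separate elements, by the prime ideal theorem). The identity connecting $f$ and $f^{-1}$ is $(f^{-1})^{-1}(U(a)) = U'(f(a))$, which I will use throughout.

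For the forward direction, assume $f$ is surjective. To see that $f^{-1}$ is injective, suppose $f^{-1}(J_1) = f^{-1}(J_2)$; given any $b \in D'$ pick $a \in D$ with $f(a) = b$, and then $b \in J_1 \iff a \in f^{-1}(J_1) = f^{-1}(J_2) \iff b \in J_2$, so $J_1 = J_2$. Continuity is immediate from the identity above. To see that $f^{-1}$ is open onto its image $X = f^{-1}(\Spec D')$, fix a basic open $U'(b)$ and choose $a$ with $f(a) = b$; then $f^{-1}(U'(b)) = \{f^{-1}(J) : f(a) \notin J\} = U(a) \cap X$, which is a basic open of the subspace $X$.

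For the converse, assume $f^{-1} : \Spec D' \to X$ is a homeomorphism onto its image. Given $b \in D'$ we must produce $a \in D$ with $f(a) = b$. Since $U'(b)$ is compact open in $\Spec D'$ and $f^{-1}$ is a homeomorphism, $f^{-1}(U'(b))$ is compact open in $X$. The crucial claim is that every compact open of $X$ equals $U(a) \cap X$ for some single $a \in D$: the sets $U(a) \cap X$ form a basis of $X$ closed under finite unions via $U(a_1) \cup \dots \cup U(a_n) = U(a_1 \vee \dots \vee a_n) \cap X$, and every compact open of any space is a finite union of its basic opens. Hence $f^{-1}(U'(b)) = U(a) \cap X$ for some $a \in D$; unwinding this equality gives $b \notin J \iff f(a) \notin J$ for every $J \in \Spec D'$, i.e.\ $U'(b) = U'(f(a))$, and the Stone isomorphism $b \mapsto U'(b)$ forces $b = f(a)$.

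The main obstacle is the converse, specifically the identification of the compact opens of the subspace $X$ as precisely the sets of the form $U(a) \cap X$ for a single $a$. This relies on (i) knowing that the compact opens of $\Spec D$ are exactly the $U(a)$, so that the $U(a) \cap X$ form a basis of $X$ already closed under finite meets and joins, and (ii) the general fact that compact opens are finite unions of basic compact opens. Once this has been established, the remainder of the argument is essentially bookkeeping with Stone duality.
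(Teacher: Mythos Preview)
Your argument is correct. Note that the paper itself does not prove this statement: it is recorded as a \emph{Fact} with a citation to \cite{dst-spectral}, so there is no in-paper proof to compare against. Your direct verification via Stone duality---reducing both directions to the identity $(f^{-1})^{-1}(U(a)) = U'(f(a))$ and, for the converse, observing that every compact open of the image $X$ has the form $U(a)\cap X$ because the basic opens of $X$ are already closed under finite joins---is exactly the standard route and is sound.

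One cosmetic slip: in the displayed union you wrote $U(a_1)\cup\dots\cup U(a_n) = U(a_1\vee\dots\vee a_n)\cap X$; the left-hand side should be $(U(a_1)\cap X)\cup\dots\cup(U(a_n)\cap X)$. The intended meaning is clear and the argument is unaffected.
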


Thus lattice congruences of $D$ are in one-to-one correspondence between
spectral subspaces of $\Spec D$ in the following sense.

\begin{definition}
Let $X$ be a spectral space and $Y \subseteq X$ be a subspace that is also
spectral. We say that $Y$ is a {\em spectral subspace} of $X$ if the inclusion
map is spectral. In other words if $Z \subseteq X$ is compact and open, then $Z
\cap Y$ is also compact.
\end{definition}

\begin{remark}
\label{congruence-image-remark}
Let $\equiv'$ be a congruence on $D$, then the image of the canonical map
$\Spec D/{\equiv'} \to \Spec D$ can be characterised as those prime ideals $I$
of $D$ that are stable under $\equiv'$, i.e. $a \in I$ and $a \equiv' b$ implies
$b \in I$. Indeed, preimages of ideals of $D/{\equiv'}$ are clearly stable under
$\equiv'$. Conversely any ideal $I \in \Spec D$ that is stable under $\equiv'$
is the preimage of the ideal $\{a/{\equiv'} : a \in I\} \in \Spec D/{\equiv'}$.
\end{remark}

Now we can characterise the space $(\Spec D)_\chi$. This, together with Fact
\ref{jacobson-fact}, answers question (7) of \cite{simmons-curious}.

\begin{theorem}
\label{chi-characterisation-theorem}
The space $(\Spec D)_\chi$ is the least spectral subspace of $\Spec D$
containing the set of maximal ideals.
\end{theorem}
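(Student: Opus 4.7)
The strategy is to leverage the correspondence between congruences on $D$ and spectral subspaces of $\Spec D$ (Fact \ref{surjective-lemma} together with Remark \ref{congruence-image-remark}), and to pin down $\equiv$ among all such congruences via Lemma \ref{top-lemma}.

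First I would show that $(\Spec D)_\chi$ is itself a spectral subspace containing every maximal ideal. The core step is to check that $\chi$-fixed prime ideals coincide with $\equiv$-stable prime ideals. The direction ``$\chi$-fixed implies $\equiv$-stable'' is immediate from $\chi(I)=I$ together with the fact that $b \equiv a \in I$ gives $b \preceq a$. For the converse, the key observation is that $a \preceq c$ implies $a \equiv a \land c$: indeed $a \land c \le a$ gives one side, and from $a \lor x = \top$ and $a \preceq c$ we get $c \lor x = \top$, whence $(a \land c) \lor x = (a \lor x) \land (c \lor x) = \top$ by distributivity. So if $I$ is $\equiv$-stable and $a \preceq c \in I$, then $a \land c \in I$ and stability forces $a \in I$, i.e.\ $\chi(I) = I$. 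By Remark \ref{congruence-image-remark} this identifies $(\Spec D)_\chi$ with the image of $\Spec(D/{\equiv}) \to \Spec D$, which is a spectral subspace by Fact \ref{surjective-lemma}. Every maximal ideal $M$ lies in $(\Spec D)_\chi$ because $M \subseteq \chi(M) \subseteq \xi(M) = M$, using $\chi \le \xi$ together with Fact \ref{jacobson-fact}.

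For minimality, let $Y$ be any spectral subspace of $\Spec D$ containing the maximal ideals. By Fact \ref{surjective-lemma}, $Y$ arises from a surjective lattice homomorphism $f : D \to D'$; let $\equiv_Y$ be its kernel congruence. By Remark \ref{congruence-image-remark}, $Y$ is precisely the set of $\equiv_Y$-stable prime ideals. I would then invoke Lemma \ref{top-lemma}: to conclude $\equiv_Y \subseteq \equiv$ it suffices to show that $a \equiv_Y \top$ forces $a = \top$. If $a \ne \top$, then ${\downarrow}a$ is a proper ideal, so Zorn's lemma produces a maximal ideal $M$ containing $a$; by hypothesis $M \in Y$, so $M$ is $\equiv_Y$-stable, and then $a \equiv_Y \top$ would place $\top$ in $M$, contradicting properness. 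Hence $\equiv_Y \subseteq \equiv$, every $\equiv$-stable prime ideal is $\equiv_Y$-stable, and $(\Spec D)_\chi \subseteq Y$.

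The only real technical point is the distributivity calculation $a \preceq c \Rightarrow a \equiv a \land c$ that bridges the $\chi$-fixed condition and $\equiv$-stability; once that is in place, Lemma \ref{top-lemma} does the heavy lifting for minimality and the rest is routine bookkeeping with Fact \ref{surjective-lemma} and Remark \ref{congruence-image-remark}.
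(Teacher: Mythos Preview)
Your proof is correct and follows essentially the same route as the paper: both arguments identify $(\Spec D)_\chi$ with the $\equiv$-stable primes (hence with the image of $\Spec(D/{\equiv})$) via Fact~\ref{surjective-lemma} and Remark~\ref{congruence-image-remark}, and then use Lemma~\ref{top-lemma} together with the existence of a maximal ideal above any $a \ne \top$ to establish minimality. The only difference is that you spell out the equivalence ``$\chi$-fixed $\Leftrightarrow$ $\equiv$-stable'' via the observation $a \preceq c \Rightarrow a \equiv a \land c$, whereas the paper treats this identification as already known (it is asserted without proof in the introduction).
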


\begin{proof}
The space $(\Spec D)_\chi$ is a spectral subspace by Lemma
\ref{surjective-lemma}. Further since $\chi$ fixes all maximal ideals, they are
contained in $(\Spec D)_\chi$.

Now let $X \subseteq \Spec D$ be a spectral space containing all maximal
ideals. Let $D'$ be the lattice of its compact open sets. Then by Lemma
\ref{surjective-lemma}, there is a surjective homomorphism $f : D \to D'$. This
map $f$ induces a congruence $a \equiv' b$ iff $f(a) = f(b)$ on $D$. Note that
if $a \neq \top$, then there is a maximal ideal $I$ of $D$ containing $a$. By
the assumption, $I \in X$ and so is stably under $\equiv'$. It follows that $a
\not \equiv' \top$. Thus by Lemma \ref{top-lemma} $\equiv' \subseteq \equiv$ and
so all ideals that are stable under $\equiv$ are stable under
$\equiv'$. Hence by Remark \ref{congruence-image-remark} $(\Spec D)_\chi
\subseteq X$.
\end{proof}

\begin{remark}
James Parson has pointed out to me that the quotient lattice $D/\equiv$ has been
studied in \cite{clq-heitmann} under the name {\em Heitmann lattice of $D$}. The
above characterisation of $(\Spec D)_\chi$ also appears there.
\end{remark}

We can now answer question (6) of \cite{simmons-curious} by characterising the
lattices for which $\xi$ and $\chi$ agree.

\begin{theorem}
\label{chi=xi-theorem}
Let $D$ be a distributive lattice. Then $\chi = \xi$ if and only if $D/{\equiv}$
is a Jacobson lattice.
\end{theorem}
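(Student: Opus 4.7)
The plan is to relate $\xi$ on $\I D$ to $\xi$ on $\I(D/{\equiv})$ via the quotient map $\pi : D \to D/{\equiv}$. The key observation to establish first is the transfer lemma: for $a, b \in D$, the equality $[a] \lor [b] = [\top]$ in $D/{\equiv}$ holds if and only if $a \lor b = \top_D$. This follows because $\equiv$ is a lattice congruence, so $[a] \lor [b] = [a \lor b]$, and because by Lemma \ref{top-lemma} the $\equiv$-class of $\top$ is a singleton.

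For the $\Leftarrow$ direction, I would assume $D/{\equiv}$ is Jacobson, take $a \in \xi(I)$ in $\I D$, and aim to show $a \in \chi(I)$. Let $J$ be the ideal of $D/{\equiv}$ generated by $\pi(I)$. I would first check that $[a] \in \xi(J)$: given $[b]$ with $[a] \lor [b] = [\top]$, the transfer lemma gives $a \lor b = \top_D$, so $a \in \xi(I)$ supplies $c \in I$ with $c \lor b = \top_D$; then $[c] \in \pi(I) \subseteq J$ and $[c] \lor [b] = [\top]$. Since $D/{\equiv}$ is Jacobson, Proposition \ref{jacobson-equivalence} gives that $\xi$ is the identity on $\I(D/{\equiv})$, hence $[a] \in J$. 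Unwinding the description of $J$, there exist $c_1, \ldots, c_n \in I$ with $[a] \le [c_1 \lor \cdots \lor c_n]$, i.e.\ $a \preceq c_1 \lor \cdots \lor c_n$; the join lies in $I$, so $a \in \chi(I)$.

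For the $\Rightarrow$ direction, assuming $\chi = \xi$ on $\I D$, I would show that $\xi$ is the identity on $\I(D/{\equiv})$ and conclude via Proposition \ref{jacobson-equivalence}. Take $J \in \I(D/{\equiv})$ and $[a] \in \xi(J)$; set $I = \pi^{-1}(J) \in \I D$. The dual transfer argument shows $a \in \xi(I)$: if $a \lor b = \top_D$ then $[a] \lor [b] = [\top]$, so some $[c] \in J$ satisfies $[c] \lor [b] = [\top]$, giving $c \lor b = \top_D$ with $c \in I$. By hypothesis $a \in \chi(I)$, so there exists $c \in I$ with $a \preceq c$, i.e.\ $[a] \le [c] \in J$, and downward closure of $J$ forces $[a] \in J$.

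The argument is essentially a clean back-and-forth between $\I D$ and $\I(D/{\equiv})$; the only subtle point is the transfer lemma about $\top$, which is exactly where Lemma \ref{top-lemma} earns its place. Once that is in hand, both directions reduce to short formal manipulations, with Proposition \ref{jacobson-equivalence} converting the Jacobson condition on $D/{\equiv}$ into the statement that $\xi$ is the identity on $\I(D/{\equiv})$.
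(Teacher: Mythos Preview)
Your argument is correct, and it takes a genuinely different route from the paper's proof. The paper argues the forward direction topologically: it invokes the homeomorphism $\Spec(D/{\equiv}) \cong (\Spec D)_\chi$ (coming from Fact~\ref{surjective-lemma} and Remark~\ref{congruence-image-remark}) together with Fact~\ref{jacobson-fact} to conclude that $(\Spec D)_\xi$ being Jacobson forces $D/{\equiv}$ to be Jacobson. For the converse, the paper works with maximal ideals: given $I = \chi(I)$ and $a \notin I$, it pulls back a maximal ideal of $D/{\equiv}$ separating $I/{\equiv}$ from $a/{\equiv}$ and checks, via Lemma~\ref{top-lemma}, that the pullback is maximal in $D$. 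By contrast, you stay entirely inside the element-level descriptions of $\xi$ and $\chi$, pushing ideals forward and pulling them back along $\pi$ and using only the transfer lemma (which is exactly Lemma~\ref{top-lemma}) plus condition~(3) of Proposition~\ref{jacobson-equivalence}. Your approach is more self-contained---it never touches Fact~\ref{jacobson-fact}, the spectral-subspace machinery, or maximal ideals---while the paper's proof has the virtue of tying the result into the structural picture of $(\Spec D)_\xi$ and $(\Spec D)_\chi$ developed earlier in the section.
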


\begin{proof}
Assume that $\chi = \xi$. Then $\Spec (D/{\equiv})$ is homeomorphic to
$(\Spec D)_\chi = (\Spec D)_\xi$ which is Jacobson by Fact \ref{jacobson-fact}.
Hence $D/{\equiv}$ is a Jacobson lattice.

Conversely assume that $D/{\equiv}$ is a Jacobson lattice. Let $I \in \I D$ be
an ideal such that $I = \chi(I)$. We show that $\xi(I) = I$. Let $a \not \in I$.
It is enough to find a maximal ideal extending $I$ and not containing $a$.
Consider the ideal $I/{\equiv}$ of $D/{\equiv}$. Since the latter is a Jacobson
lattice, there is a maximal ideal $J'$ of $D/{\equiv}$ extending $I/{\equiv}$
and not containing $a/{\equiv}$. Let $J = \{b \in D : b/{\equiv} \in J'\}$ be
the preimage of $J'$ under the canonical map. Then $I \subseteq J$ and $a \not
\in J$. But if $c \not \in J$, then there is $b \in J$ such that $b \lor c
\equiv \top_D$ (since $J' = J/{\equiv}$ is maximal). Then by Lemma
\ref{top-lemma} we get that $b \lor c = \top_D$ showing that $J$ is a maximal
ideal.
\end{proof}

As a final remark we highlight some connections with model theory.  The passage
from $D$ to $D/{\equiv}$ (and therefore the nucleus $\chi$) has a manifestation
in model theory: it corresponds to the passage from a theory to its largest
companion. This can most elegantly be expressed in coherent logic (also called
positive model theory). However it can already be seen in the classical Robinson
model theory, which is the setting we adopt here. Suppose $T$ is an inductive
theory (i.e. $\forall \exists$-axiomatisable) and $\Pi_n$ is the set of
universal formulas in $n$ variables $\bar x = (x_1, ..., x_n)$ (we identify two
formulas $\phi(\bar x)$ and $\psi(\bar x)$ if $T \models \forall \bar x
(\phi(\bar x) \liff \psi(\bar x))$). Then $\Pi_n$ is naturally a distributive
lattice with the order $\phi(\bar x) \le \psi(\bar x)$ iff $T \models \forall
\bar x (\phi(\bar x) \to \psi(\bar x))$. Then it can be shown that $\phi \preceq
\psi$ in $\Pi_n$ if and only if $\forall \bar x (\psi(\bar x) \to \phi(\bar x))$
is true in every existentially closed model of $T$.

Let $T_{\mathrm K}$ denote the $\forall \exists$-theory of all existentially
closed models of $T$.  Then $T_{\mathrm K}$ and $T$ are companions meaning that
they imply the same $\Pi_n$ formulas. The fact that $T_{\mathrm K}$ is the
largest companion of $T$ is precisely because of Lemma \ref{top-lemma}. We can
further see that $T$ will have a model companion (i.e. every model of
$T_{\mathrm K}$ is existentially closed) if and only if for every $n$ the
lattice $\Pi_n/{\equiv}$ is a boolean algebra (and therefore also Jacobson).

\bibliographystyle{plainnat}
\bibliography{../all}

\begin{thebibliography}{10}
\providecommand{\natexlab}[1]{#1}
\providecommand{\url}[1]{\texttt{#1}}
\expandafter\ifx\csname urlstyle\endcsname\relax
  \providecommand{\doi}[1]{doi: #1}\else
  \providecommand{\doi}{doi: \begingroup \urlstyle{rm}\Url}\fi

\bibitem[Banaschewski(1988)]{banaschewski-tychonoff}
Bernhard Banaschewski.
\newblock Another look at the localic {T}ychonoff theorem.
\newblock \emph{Commentationes Mathematicae Universitatis Carolinae},
  29\penalty0 (4):\penalty0 647--656, 1988.

\bibitem[Coquand(2003)]{coquand-compact}
Thierry Coquand.
\newblock Compact spaces and distributive lattices.
\newblock \emph{Journal of Pure and Applied Algebra}, 184\penalty0
  (1):\penalty0 1--6, 2003.

\bibitem[Coquand et~al.(2006)Coquand, Lombardi, and Quitt\'e]{clq-heitmann}
Thierry Coquand, Henri Lombardi, and Claude Quitt\'e.
\newblock Dimension de {H}eitmann des treillis distributifs et des anneaux
  commutatifs.
\newblock \emph{Publications Math\'ematiques de Besan\c{c}on, Alg\`ebre et
  Th\'eorie des Nombres}, 2006:\penalty0 57--100, 2006.

\bibitem[Dickmann et~al.(2019)Dickmann, Schwartz, and Tressl]{dst-spectral}
Max Dickmann, Niels Schwartz, and Marcus Tressl.
\newblock \emph{Spectral Spaces}.
\newblock Cambridge University Press, 2019.

\bibitem[Isbell(1972)]{isbell-locales}
John~R. Isbell.
\newblock Atomless parts of spaces.
\newblock \emph{Mathematica Scandinavica}, 31:\penalty0 5--32, 1972.

\bibitem[Johnstone(1982)]{johnstone-stone-spaces}
Peter Johnstone.
\newblock \emph{Stone Spaces}.
\newblock Cambridge University Press, 1982.

\bibitem[Johnstone(1984)]{johnstone-amax}
Peter Johnstone.
\newblock Almost maximal ideals.
\newblock \emph{Fundamenta Mathematicae}, 123:\penalty0 197--209, 1984.

\bibitem[Picado and Pultr(2012)]{picado-pultr-book}
Jorge Picado and Ale\v{s} Pultr.
\newblock \emph{Frames and Locales}.
\newblock Springer, 2012.

\bibitem[Simmons(1978)]{simmons-separation}
Harold Simmons.
\newblock The lattice theoretical part of topological separation properties.
\newblock \emph{Proceedings of the Edinburgh Mathematical Society},
  21:\penalty0 41--48, 1978.

\bibitem[Simmons(2010)]{simmons-curious}
Harold Simmons.
\newblock A curious nucleus.
\newblock \emph{Journal of Pure and Applied Algebra}, 214\penalty0
  (11):\penalty0 2063--2073, 2010.

\end{thebibliography}

\end{document}